
%

\documentclass{amsart}

\usepackage{breqn,amssymb,amsmath,graphicx,url,float}
\makeatletter
\@namedef{subjclassname@2020}{%
  \textup{2020} Mathematics Subject Classification}
\makeatother
\usepackage{lineno}\linenumbers
\newtheorem{theorem}{Theorem}[section]

\DeclareMathOperator{\sech}{sech}

\theoremstyle{definition}

\theoremstyle{remark}

\setlength{\textheight}{21.6cm} 

\setlength{\textwidth}{14cm} 

\setlength{\oddsidemargin}{1cm} 

\setlength{\evensidemargin}{1cm} 

\pagestyle{myheadings} 

\thispagestyle{empty}

\numberwithin{equation}{section}

\begin{document}

\title[A Method for Evaluating Definite Integrals]{A Method for Evaluating Definite Integrals in terms of Special Functions with Examples}

\author{Robert Reynolds}
\address[Robert Reynolds]{Department of Mathematics and Statistics, York University, Toronto, ON, Canada, M3J1P3}
\email[Corresponding author]{milver@my.yorku.ca}
\thanks{}

\author{ Allan Stauffer}
\address[Allan Stauffer]{Department of Mathematics and Statistics, York University, Toronto, ON, Canada, M3J1P3}
\email{stauffer@yorku.ca}
\thanks{This research is supported by NSERC Canada under Grant 504070}

\subjclass[2020]{Primary  30E20, 33-01, 33-03, 33-04, 33-33B, 33E20, 33E33C}

\keywords{Reciprocal Log, Definite integrals, Infinite series, Special function, Catalan, Log-gamma, Hurwitz zeta function.}

\date{}

\dedicatory{}

\begin{abstract}
We present a method using contour integration to derive definite integrals and their associated infinite sums which can be expressed as a special function. We give a proof of the basic equation and some examples of the method. The advantage of using special functions is their analytic continuation which widens the range of the parameters of the definite integral over which the formula is valid. We give as examples definite integrals of logarithmic functions times a trigonometric function. In various cases these generalizations evaluate to known mathematical constants such as Catalan's constant and $\pi$.
\end{abstract}

\maketitle
\section{Introduction} 
The main purpose of this work is to establish a method which can be used to derive definite integrals expressed as a special function, in this case the Hurwitz zeta function. This method involves using a form of the Cauchy integral formula. A proof is given of the Cauchy integral and we show some examples of how the method works in practice. We multiply both sides of the Cauchy integral by a function then take a definite integral of both sides. This yields a definite integral in terms of a contour integral. Then we multiply both sides of the Cauchy formula by another function and take the infinite sum of both sides such that the contour integral of both equations are the same. This infinite sum can be expressed as a Hurwitz zeta function. Given that the contour integrals are the same, we can equate the Hurwitz zeta function and definite integral to give a closed form equation. This method has been used in previous work, \cite{reyn0}, \cite{reyn1}, \cite{reyn2} and \cite{reyn3} to derive formulas in the famous books of Gradshteyn and Rhyzik \cite{grad} and Bierens de Haan \cite{bdh}. The objective of this article is to establish the contour which has been used in our previous papers and to use this contour to derive more interesting integral formula in terms of closed forms not previously derived in the field of mathematics. Another goal of this work is to provide readers the tools to derive other integral formula using this method whenever applicable.
\section{The Method Of Simultaneous Contour Integrals}
\subsection{The Contour}
\begin{figure}[H]
\centering
\includegraphics{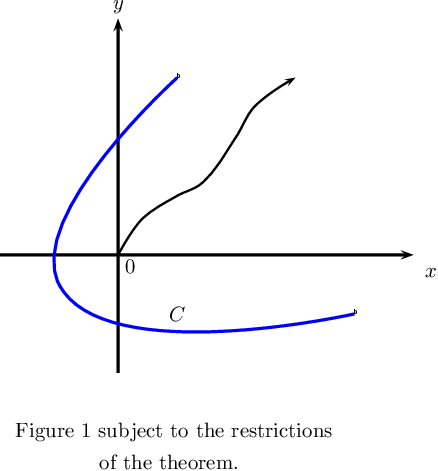}
\label{fig1}
\end{figure}
The contour $C$ is shown in Figure 1. The cut prevents the curve from going round the origin.
\subsection{Summary of the method}
We start with Cauchy's integral, 
\begin{equation}\label{intro:cauchy}
	\frac{y^k}{\Gamma(k+1)}=\frac{1}{2 \pi i}\int_{C}\frac{e^{wy}}{w^{k+1}}dw
\end{equation}
Here $C$ is the contour with the path in the complex plane shown by Figure 1. When $k$ is a positive integer, $C$ is simply a contour encircling the origin. In our work we have used a specific form of the Hankel contour (see Figure 2).  A generalized form of this contour was originally used by Heine in deriving an integral representation of the reciprocal Gamma function \cite{krantz} and \cite{prodanov}.\\\\
The goal is to evaluate two forms of this equation simultaneously by substituting various expressions for $y$ and multiplying the resulting equation by functions which lead to the same contour integral. The right hand sides will have the same contour integral representation and hence the left hand sides are equal.
\section{Derivation Of The Contour Integral}

\begin{theorem}\label{T0.11}
For any complex number $k$ and $y$ an arbitrary complex variable which may contain complex constants,  if $C$ is a contour in the complex plane, cut from zero to infinity such that the difference of the quantities $ye^{yw}w^{-k-1}$ and 
$(k-1) e^{yw} w^{-k}$ is zero at the endpoints of $C$, then
\begin{equation*}
	\frac{y^k}{\Gamma(k+1)}=\frac{1}{2 \pi i}\int_{C}\frac{e^{wy}}{w^{k+1}}dw
\end{equation*}
\end{theorem}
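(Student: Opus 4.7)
The plan is to recognize the right-hand side as the essentially unique solution of a first-order linear ODE in $y$, and then to pin down the multiplicative constant by comparison with Hankel's classical contour representation of $1/\Gamma$.

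First I would set $I(y) := \frac{1}{2\pi i}\int_{C} e^{wy} w^{-k-1}\, dw$ and differentiate under the integral sign (justified by uniform convergence of the contour integral on compact $y$-subsets away from the cut), obtaining
\[
I'(y) \;=\; \frac{1}{2\pi i}\int_{C} e^{wy}\, w^{-k}\, dw.
\]
The key algebraic step is the product-rule identity
\[
\frac{d}{dw}\bigl(e^{wy} w^{-k}\bigr) \;=\; y\, e^{wy} w^{-k} \;-\; k\, e^{wy} w^{-k-1},
\]
which allows us to write
\[
y\, I'(y) \;-\; k\, I(y) \;=\; \frac{1}{2\pi i}\int_{C}\frac{d}{dw}\bigl(e^{wy} w^{-k}\bigr)\, dw \;=\; \frac{1}{2\pi i}\bigl[e^{wy} w^{-k}\bigr]_{\partial C}.
\]
The endpoint hypothesis of the statement is precisely what is needed to force this boundary term to vanish (the two tabulated quantities $y e^{yw} w^{-k-1}$ and $(k-1) e^{yw} w^{-k}$ arise as the natural antiderivative pair whose difference controls the boundary contribution), so $I$ satisfies the separable ODE $y\, I'(y) = k\, I(y)$.

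Solving this ODE on any simply connected $y$-domain away from the cut yields $I(y) = A\, y^{k}$ for a constant $A$ depending only on $k$ and on the geometry of $C$. To identify $A = 1/\Gamma(k+1)$, I would perform the substitution $w = t/y$, which maps $C$ onto a contour that by Cauchy's theorem can be deformed onto the standard Hankel contour $H$ (the integrand $e^{t}\, t^{-k-1}$ is analytic except at the origin and decays appropriately at infinity along the two sides of the cut). Hankel's classical identity
\[
\frac{1}{\Gamma(k+1)} \;=\; \frac{1}{2\pi i}\int_{H} e^{t}\, t^{-k-1}\, dt
\]
then fixes $A$ and delivers the stated formula.

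The main obstacle I anticipate is the analytic bookkeeping: justifying the differentiation under the integral sign for arbitrary complex $k$, making the substitution $w = t/y$ and the subsequent deformation to $H$ rigorous without crossing the prescribed branch cut, and interpreting the endpoint condition carefully when the endpoints of $C$ lie at infinity (so that ``zero'' means a limit along the contour, taken in sectors where $e^{wy}$ decays). Once these routine but delicate analytic details are in place, the ODE argument together with the Hankel identification closes the proof.
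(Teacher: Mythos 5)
Your argument is correct in outline but takes a genuinely different route from the paper. The paper follows Wang and Guo's kernel method for a \emph{second-order} Euler-type equation $ay^{2}u''+b(k-1)yu'+ck(k-1)u=0$ with $a+b+c=0$: it substitutes $u(y)=\int_C e^{yw}v(w)\,dw$ with $v=w^{-k-1}$, integrates by parts twice, uses the endpoint hypothesis to kill the boundary terms, concludes $L[u]=0$, and then simply identifies $u$ with the particular solution $2\pi i\,y^{k}/\Gamma(k+1)$. You instead derive the \emph{first-order} equation $yI'(y)=kI(y)$ from a single integration by parts of $\frac{d}{dw}(e^{wy}w^{-k})$, solve it to get $I(y)=Ay^{k}$, and fix $A=1/\Gamma(k+1)$ by rescaling $w=t/y$ onto the classical Hankel contour. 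Your version buys two real improvements: the first-order equation has a one-dimensional solution space, so you automatically exclude the second Euler solution that the paper's two-dimensional ODE admits but never rules out; and the Hankel identification actually determines the normalizing constant, which the paper asserts rather than proves (its phrase ``here we use only the solution $2\pi i y^k/\Gamma(k+1)$'' is where the constant silently enters). What the paper's route buys in exchange is that its boundary term is literally the expression named in the theorem's hypothesis, whereas your integration by parts produces the boundary term $\bigl[e^{wy}w^{-k}\bigr]_{\partial C}$, which is not verbatim the stated difference of $ye^{yw}w^{-k-1}$ and $(k-1)e^{yw}w^{-k}$; you should either note that the stated hypothesis implies the vanishing of your boundary term (under the same decay considerations) or restate the endpoint condition you actually need. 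The analytic caveats you flag --- differentiation under the integral sign, branch bookkeeping in $(t/y)^{-k-1}$, and the sector restriction needed to deform the rotated contour back to $H$ --- are genuine and are the places where the details must be supplied, but none of them is an obstruction to the strategy.
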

\begin{proof}
If $k$ is an integer then by Cauchy’s residue theorem the theorem holds where the plane is not cut and $C$ is a closed contour circling the origin.\\\\
For $k$ not an integer we follow section (2.12) and (2.13) of \cite{wang}.  Consider the differential equation 
\begin{equation}\label{intro:ode}
	L[u]=a y^{2}u''+b(k-1)y u'+c k(k-1)u=0, a+b+c=0
\end{equation}
A general solution to the above differential equation consists of the sum of two particular solutions.  Here we use only the solution $\frac{2\pi i y^{k}}{\Gamma(k+1)}$.
\\\\
The solution of the differential equation (\ref{intro:ode}) can be written as
\begin{equation}
	u(y)=\int_{C}K(y,w)v(w)dw
\end{equation}
where C is given in Figure 1 and take $K(y,w)=e^{yw}$.
Then 
\begin{equation}\label{intro:ode1}
\begin{split}
	L[u] &= \int_{C}[ay^{2}w^{2}+b(k-1)yw+ck(k-1)]e^{yw}v(w)dw\\\\
	&= \int_{C}[aw^{2}\frac{\partial^{2}(e^{yw})}{\partial w^{2}}+b(k-1)w\frac{\partial (e^{yw})}{\partial w}+ck(k-1)e^{yw}]v(w)dw
\end{split}
\end{equation}
where we have replaced $y$ and its square by partial derivatives of $K$ with respect to $w$.\\\\
Now
\begin{equation}
	\frac{\partial }{\partial w}(e^{yw})(wv)=\left (\frac{\partial }{\partial w}(e^{yw}wv)-e^{yw}\frac{d}{d w}(wv)\right )
\end{equation}
\begin{equation}\label{intro:partial}
\begin{split}
	\frac{\partial^{2}}{\partial w^{2}}(e^{yw})(w^{2}v) =& \frac{\partial}{\partial w}\left (\frac{\partial }{\partial w}(e^{yw})(w^{2}v)-(e^{yw})\frac{d }{d w}(w^{2}v)\right)\\
	&+(e^{yw})\frac{d^2}{d w^2}(w^2v)
	\end{split}
\end{equation}
Using the above two equations we can write the differential equation (\ref{intro:ode1}) as
\begin{multline}
	L[u] = \int_{C}\lbrace {a\frac{d^{2}}{d w^{2}}(w^{2}v)-b(k-1)\frac{d}{d w}(vw)+ck(k-1)v\rbrace }e^{yw} dw\\+\lbrace{(avw^{2}\frac{\partial}{\partial w}e^{yw}-a e^{yw} \frac{\partial}{\partial w}(w^{2}v)+b(k-1)e^{yw}(wv))\rbrace}_{C}
	\end{multline}
where $\{...\}_{C}$  represents the difference of the term in the curly brackets at the endpoints of $C$.\\\\	
Take $v=w^{-k-1}$.\\\\
Then $\frac{d}{d w}(wv)=-kw^{-k-1}$ and $\frac{d^{2}}{d w^{2}}(w^{2}v)=-k(-k+1)w^{-k-1}$ so that equation (\ref{intro:partial1}) becomes
\begin{equation}\label{intro:partial1}
\begin{split}
	L[u] &= \int_{C}[ak(k-1)+bk(k-1)+ck(k-1)]w^{-k-1}e^{yw}dw\\
	&+\lbrace{(avw^{2}\frac{\partial}{\partial w}e^{yw}-ae^{yw}\frac{d}{d w}(w^{2}v)+b(k-1)e^{yw}(wv))\rbrace}_{C}\\\\
	&=0 
	\end{split}
\end{equation}
since $a+b+c=0$ and using the conditions of the theorem. Thus 
\begin{equation}
\frac{2\pi i y^{k}}{\Gamma(k+1)}=\int_{C}\frac{e^{wy}}{w^{k+1}}dw.
\end{equation}
\end{proof}
where $k$ is a complex number not an integer.
\section{A Definite Integral and Associated Infinite Series}
We now demonstrate this method by deriving the following relation, using (9.521.1) in \cite{grad} for the Hurwitz zeta function $\zeta(s, u)$ where $a$ is a complex number which can be written as $a = r e^{i \theta}$ and $0 < \theta < 2 \pi$.

\begin{multline}\label{eqn:c2lt}
\int_{0}^{\pi/2}\cos(2y)\log^{k}(a\tan(y))dy\\
=2^{k-1}k\pi^{k}i^{k+1}\left(\zeta\left(1-k,\frac{1}{4}-\frac{i\log(a)}{2\pi}\right)-\zeta\left(1-k,\frac{3}{4}-\frac{i\log(a)}{2\pi}\right)\right)
\end{multline}
\begin{figure}[H]
\centering
\includegraphics{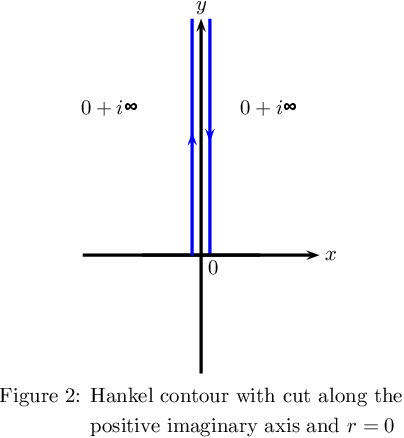}
\label{fig1}
\end{figure}
\newpage
\subsection{Definite integral derivation}
From (\ref{intro:cauchy}) we replace $y$ with $\log(a\tan(y))$, and multiply both sides by $\cos(2y)$ to get
\begin{equation}\label{eqn:cauchy2_c2lt}
	\frac{\log^k(a\tan(y))\cos(2y)}{\Gamma(k+1)}=\frac{1}{2 \pi i}\int_{C}\frac{(a\tan(y))^{w}}{w^{k+1}}\cos(2y)dw
\end{equation}
where $C$ is the Hankel contour in Figure 2. We have chosen the contour such the the integrand vanishes at the end points of the contour and obeys the conditions on the integral and sum which obviously satisfies the conditions of the theorem. The radius of the circular part around the origin is taken in the limit of zero and the cut is taken along the positive imaginary axis. The distance between the two vertical branches of the contour is zero although they are on opposite sides of the cut.\\\\
Using equation (3.636.2) in \cite{grad}, then integrating both sides of (\ref{eqn:cauchy2_c2lt}) with respect to $y$ and interchange the integrals over $w$ and $y$ we get
\begin{equation}\label{eqn:proof1}
\begin{split}
	\int_{0}^{\pi/2}\frac{\log^k(a\tan(y))\cos(2y)}{\Gamma(k+1)}dy &=\frac{1}{2 \pi i}\int_{C}\left(\int_{0}^{\pi/2}\frac{(a\tan(y))^{w}}{w^{k+1}}\cos(2y)dy\right)dw\\\\
	&= -\frac{1}{4 \pi i}\int_{C}\pi a^w w^{-k} \sec\left(\frac{\pi w}{2}\right)dw
	\end{split}
\end{equation}
where $-1<$Re$(w)<1$ and $C$ is the Hankel contour in Figure 2. As well, $a$ cannot be real and non-negative when $Re(k) < 0$ unless $a = 1$, when the logarithm becomes zero at $y = \pi/4$ and $\cos(2y)$ is zero there. If $Re(k) > 0$ there are singularities at the end points of the integral but these are integrable since they involve the logarithmic function.
\newpage
\subsection{Infinite sum derivation}
Using equation (\ref{intro:cauchy}) and replacing $y$ with $\pi i(2n+1)/2+\log(a)$ and $k$ by $k-1$ to get
\begin{equation}
	\frac{(\pi i(2n+1)/2+\log(a))^{k-1}}{\Gamma(k)}=\frac{1}{2 \pi i}\int_{C}\frac{e^{w(\frac{\pi i(2n+1)}{2}+\log(a))}}{w^{k}}dw
\end{equation}
Multiply both sides by $-\pi(-1)^n$ and take the infinite sum over $n$ to get
\begin{equation}\label{eqn:cauchy3}
\begin{split}
	-\pi\sum_{n=0}^{\infty}\frac{(\pi i(2n+1)/2+\log(a))^{k-1}}{\Gamma(k)}(-1)^n &= -\sum_{n=0}^{\infty}\frac{\pi(-1)^n}{2 \pi i}\int_{C}\frac{e^{w(\pi i(2n+1)/2+\log(a))}}{w^{k}}dw\\\\
	&= -\frac{\pi}{2 \pi i}\int_{c}\frac{dw}{w^k}\sum_{n=0}^{\infty}(-1)^ne^{w(\pi i(2n+1)/2+\log(a))}\\\\
	&= -\frac{1}{4 \pi i}\int_{C} \pi a^w w^{-k} \sec\left(\frac{\pi w}{2}\right)dw
	\end{split}
\end{equation}
from (1.232.2) in \cite{grad} where $\sech(-ix)=\sec(x)$.\\
The left hand side of equation (\ref{eqn:cauchy3}) will  converge only if the terms in $n$ go to zero as $n$ goes to $\infty$ which requires $Re(k) < 1$. The last line of equation (\ref{eqn:cauchy3}) is obtained using (1.232.2) in \cite{grad} where Im($w$)$>0$ and $C$ is the Hankel contour in Figure 2.\\\\
We are able to write the infinite sum as a Hurwitz zeta function
\begin{multline}\label{eqn:proof2}
	\frac{(2\pi i)^{k-1}}{\Gamma(k)}\left(\zeta\left(1-k,\frac{1}{4}-\frac{i\log(a)}{2\pi}\right)-\zeta\left(1-k,\frac{3}{4}-\frac{i\log(a)}{2\pi}\right)\right)\\
	= -\pi\sum_{n=0}^{\infty}\frac{(\pi i(2n+1)/2+\log(a))^{k-1}}{(-1)^{-n} \Gamma(k)}
\end{multline}
Since the right hand side of equation (\ref{eqn:proof1}) is equal to the right hand side of equation (\ref{eqn:cauchy3}) and the contour $C$ is equal then the left hand sides of these equations are equal, hence the following relation holds for $k$ and $a$ general complex numbers and is given by:
\begin{multline}\label{eqn:intsum}
	\int_{0}^{\pi/2}\log^k(a\tan(y))\cos(2y)dy\\
	=2^{k-1}k\pi^{k}i^{k+1}\left(\zeta\left(1-k,\frac{1}{4}-\frac{i\log(a)}{2\pi}\right)-\zeta\left(1-k,\frac{3}{4}-\frac{i\log(a)}{2\pi}\right)\right)
\end{multline}
Since the left-hand side of equation (\ref{eqn:cauchy3}) can be continued analytically using the Hurwitz zeta function we can remove the restriction $Re(k)<1$, so that $k$ is a general complex number. 
\newpage
\subsection{A result involving Catalan's constant}
From equation (\ref{eqn:intsum}) when $k=-1$ and $a=1$ we get using a closed contour round the origin
\begin{equation}\label{eqn:intsum1}
\begin{split}
	\int_{0}^{\pi/2}\frac{\cos(2y)}{\log(\tan(y))}dy &=2^{-2}(-1)\pi^{-1}\left(\zeta\left(2,\frac{1}{4}\right)-\zeta\left(2,\frac{3}{4}\right)\right)\\\\
	&=-\frac{4}{\pi}\sum_{n=0}^{\infty}\frac{(-1)^n}{(2n+1)^{2}}\\\\
	&=-\frac{4 G}{\pi}
	\end{split}
\end{equation}
where G is Catalan’s constant, so named in honor to E. C. Catalan (1814-1894) who first developed series and definite integral representations for his constant.

\subsection{A result involving the logarithm of the gamma function $\log[\Gamma(z)]$}
We take the first derivative of equation (\ref{eqn:intsum}) with respect to $k$ then set $k=1$ and $a=1$ simplify to get
\begin{equation}
\begin{split}
\int_{0}^{\pi/2}\frac{\log(\log(\tan(y)))\log(\tan(y))}{\sec(2y)}dy
&=-2^{-1}(\pi i)\left(\zeta\left(0,\frac{1}{4}\right)(-2i+\pi-2i\log(2\pi))\right)\\
&-2^{-1}(\pi i)\left(\zeta\left(0,\frac{3}{4}\right)(2i-\pi+2i\log(2\pi))\right)\\
&-2^{-1}(\pi i)\left(2i\left(\zeta'\left(0,\frac{1}{4}\right)-\zeta'\left(0,\frac{3}{4}\right)\right)\right)\\\\
&=\frac{\pi}{4}\left(-\pi i+\log\left(\frac{81}{4}\right)-2\left(1+\log(\pi)\right)\right)\\
&+\frac{\pi}{4}\left(-2\left(-2\log\left[\Gamma\left(-\frac{3}{4}\right)\right]+2\log\left[\Gamma\left(-\frac{1}{4}\right)\right]
\right)\right)\\\\
&=\frac{\pi}{4}\left(\log\left[\frac{81}{4\pi^2e^2}\frac{\Gamma^4\left(-\frac{3}{4}\right)}{\Gamma^4\left(-\frac{1}{4}\right)}\right]-\pi i\right)
\end{split}
\end{equation}
from (3.10) in \cite{choi}.
\newpage
\section{Conclusion}
In this paper, we have presented a novel method for deriving a relation between an infinite sum expressed as a special function and its associated definite integral. This method has shown the ease with which a series can be derived for the integral of the reciprocal logarithm function which has not be been documented to any great extent. The fact that the special function can be analytically continued widens the range of parameters over which the formula for the integral is valid. This method yields two equations from two forms of Cauchy's Integral formula \emph{viz} an infinite sum and definite integral. Since the two equations can be expressed as an identical contour integral the two left-hand sides can be equated. This method can be extended to other forms to yield more interesting relations. The results presented were numerically verified for both real and imaginary values of the parameters in the integrals using Mathematica by Wolfram.\\\\


%


\noindent{\bf Acknowledgements.} This paper is fully supported by the Natural Sciences and Engineering Research Council (NSERC) Grant No. 504070.

\end{document}